\newcounter{alphabet}
\newcounter{tmp}
\newenvironment{Thm}[1][]{\refstepcounter{alphabet}%
\bigskip%
\noindent%
{\bf Theorem \Alph{alphabet}}%
\ifthenelse{\equal{#1}{}}{}{ (#1)}%
{\bf .} \itshape}{\vskip 8pt}
\newcommand{\Ref}[1]{\@ifundefined{r@#1}{}{\setcounter{tmp}{\ref{#1}}\Alph{tmp}}}
\newcounter{alphabet2}
\newtheorem{theorem}{Theorem}[section]
\newtheorem{lemma}[theorem]{Lemma}
\newtheorem{ca}{Case}
\newtheorem{remark}[theorem]{Remark}
\newtheorem{example}[theorem]{Example}
\numberwithin{equation}{section}
\newcounter{minutes}\setcounter{minutes}{\time}
\newcounter{hours}\setcounter{hours}{\time}
\newcommand{\IR}{\mathbb{R}}
\newcommand{\ds}{\displaystyle}
\newcommand{\R}{\mathbb{R}}
\newcommand{\C}{\mathbb{C}}
\newcommand{\real}{{\rm Re}}
\def\be{\begin{equation}}
\def\ee{\end{equation}}
\newcommand{\bca}{\begin{ca}}
\newcommand{\eca}{\end{ca}}
\newcommand{\bsca}{\begin{sca}}
\newcommand{\esca}{\end{sca}}
\newcommand{\bcl}{\begin{cl}}
\newcommand{\ecl}{\end{cl}}
\newcommand{\bscl}{\begin{scl}}
\newcommand{\escl}{\end{scl}}
\newcommand{\bcons}{\begin{conjs}}
\newcommand{\econs}{\end{conjs}}
\newcommand{\bprop}{\begin{propo}}
\newcommand{\eprop}{\end{propo}}
\newcommand{\br}{\begin{rem}}
\newcommand{\er}{\end{rem}}
\newcommand{\brs}{\begin{rems}}
\newcommand{\ers}{\end{rems}}
\newcommand{\bo}{\begin{obser}}
\newcommand{\eo}{\end{obser}}
\newcommand{\bos}{\begin{obsers}}
\newcommand{\eos}{\end{obsers}}
\newcommand{\ba}{\begin{array}}
\newcommand{\ea}{\end{array}}
\newcommand{\beq}{\begin{eqnarray}}
\newcommand{\beqq}{\begin{eqnarray*}}
\newcommand{\eeq}{\end{eqnarray}}
\newcommand{\eeqq}{\end{eqnarray*}}
\begin{document}
\vspace*{-2cm}
\title[Boundary Schwarz Lemma for non-homogeneous biharmonic equations]
{Boundary Schwarz lemma for solutions to non-homogeneous biharmonic equations}
%\date{\today}
% Time Stamp%%%%%%%%%%%%%%%%%
\def\thefootnote{}
\footnotetext{ \texttt{\tiny File:~\jobname .tex,
          printed: \number\day-\number\month-\number\year,
          \thehours.\ifnum\theminutes<10{0}\fi\theminutes}
} \makeatletter\def\thefootnote{\@arabic\c@footnote}\makeatother
%%%%%%%%%%%%%%%%%%%%%%%%%%%%%

\author[M. R. Mohapatra]{Manas Ranjan Mohapatra}
\address{Manas Ranjan Mohapatra, Department of Mathematics,
Shantou University, Shantou, 515063, People's Republic of China
}
\email{manas@stu.edu.cn}

\author[X. T. Wang]{Xiantao Wang${}^{~\mathbf{*}}$}
\address{Xiantao Wang, Department of Mathematics, Hunan Normal University, Changsha, Hunan 410081, People's Republic of China, and Department of Mathematics,
Shantou University, Shantou, Guangdong 515063, People's Republic of China
}
\email{xtwang@hunnu.edu.cn}

\author[J.-F. Zhu]{Jian-Feng Zhu}
\address{Jian-Feng Zhu, Department of Mathematics,
	Shantou University, Shantou, 515063, People's Republic of China
	and School of Mathematical Sciences, Huaqiao University, Quanzhou, 362021
	, People's Republic of China
}
\email{flandy@hqu.edu.cn}

\begin{abstract}
In this paper, we establish a boundary Schwarz lemma for solutions to
non-homogeneous biharmonic equations. 
\end{abstract}

\noindent

\subjclass[2000]{Primary: 30C80; Secondary: 31A30}
\keywords{Boundary Schwarz lemma; solution; non-homogeneous biharmonic equation.\\
$^{\mathbf{*}}$Corresponding author}

\keywords{
Boundary Schwarz lemma; solution; non-homogeneous biharmonic equation.\\
$^{\mathbf{*}}$Corresponding author}

\maketitle
\thispagestyle{empty}
\section{Introduction and Main Result}
The classical Schwarz lemma says that an analytic function
$f$ from the unit disk $\mathbb{D}=\{z\in
\mathbb{C}:|z|<1\}$ into itself with $f(0)=0$ must map
each smaller disk $\{z\in \mathbb{C}:\; |z|<r<1\}$ into itself. Also, $|f'(0)|\le 1$, and $|f'(0)|= 1$
if and only if $f$ is a rotation of $\mathbb{D}$.
This is a very powerful tool in complex analysis.
An elementary consequence of Schwarz lemma is that if $f$ extends
continuously to some boundary point $\alpha$,
$|f(\alpha)|=1$, and if $f$ is differentiable at $\alpha$, then $|f'(\alpha)|\ge 1$ (see, for example,
\cite{Gar-book,Oss00}).

Establishing various versions of Schwarz lemma
and boundary Schwarz lemma has
attracted many researchers in recent years. In \cite{BK94}, Burns and Krantz obtained a Schwarz lemma at the boundary for holomorphic mappings defined on $\mathbb{D}$ as well as on balls in $\mathbb{C}^n$. They have also obtained similar results for holomorphic mappings on strongly convex and strongly pseudoconvex domains in $\mathbb{C}^n$. Liu and Tang in \cite{LT15} obtained the boundary Schwarz lemma for holomorphic mappings defined on the unit ball in $\mathbb{C}^n$. We refer the survey article by Krantz \cite{Kra11} for a brief history on the Schwarz lemma at the boundary.

The Schwarz lemma at the boundary plays an important
role in complex analysis. For example, by using the Schwarz lemma at the boundary,
Bonk improved the previously known lower bound for the Bloch constant in \cite{Bonk90}.
The boundary Schwarz lemma is also a fundamental tool in the study of the geometric properties of functions of several complex variables; see \cite{LT15,LT16,LWT15}.
In this paper, we are interested in establishing a boundary Schwarz lemma for functions which satisfy certain partial differential equations, namely,
the non-homogeneous biharmonic equations.
We now proceed to write some notations and preliminaries which are required
to state our result.

We denote by $\mathbb{T}=\partial\mathbb{D}$ the boundary of
$\mathbb{D}$, and by $\overline{\mathbb{D}}=\mathbb{D}\cup \mathbb{T}$, the closure of $\mathbb{D}$. For any subset $\Omega$ of $\C$, we denote by $\mathcal{C}^{m}(\Omega)$, the set of all complex-valued $m$-times continuously differentiable functions from
$\Omega$ into $\mathbb{C}$, where $m\in \mathbb{N}\cup\{0\}$. In
particular, $\mathcal{C}(\Omega):=\mathcal{C}^{0}(\Omega)$ denotes the
set of all continuous functions in $\Omega$.

For a real $2\times2$ matrix $A$, we use the matrix norm
$$\|A\|=\sup\{|Az|:\,z\in \mathbb{T}\}$$ and the matrix function
$$\lambda(A)=\inf\{|Az|:\, z\in \mathbb{T}\}.$$

For $z=x+iy\in\mathbb{C}$ with $x$, $y\in \IR$, the
formal derivative of a complex-valued function $f=u+iv$ is given
by
$$D_{f}=\left(\begin{array}{cccc}
\ds u_{x}\;~~ u_{y}\\[2mm]
\ds v_{x}\;~~ v_{y}
\end{array}\right),
$$
so that
$$\|D_{f}\|=|f_{z}|+|f_{\overline{z}}| ~\mbox{ and }~ \lambda(D_{f})=\big| |f_{z}|-|f_{\overline{z}}|\big |,
$$
where $$f_{z}=\frac{1}{2}\big(
f_x-if_y\big)\;\;\mbox{and}\;\; f_{\overline{z}}=\frac{1}{2}\big(f_x+if_y\big).$$  We use
$$J_{f}:=\det D_{f} =|f_{z}|^{2}-|f_{\overline{z}}|^{2}
$$
to denote the {\it Jacobian} of $f$.

Let $f^{\ast}, g\in\mathcal{C}(\overline{\mathbb{D}})$, $\varphi \in \mathcal{C}(\mathbb{T})$
 and
$f\in\mathcal{C}^{4}(\mathbb{D})$. We are interested in
the following {\it non-homogeneous biharmonic equation} defined in
$\mathbb{D}$:
\be\label{eq-ch-1.0}\Delta(\Delta f)=g\ee with the following
associated Dirichlet boundary value: \be\label{eq-ch-1}
%f_{\overline{z}}=\varphi &\mbox{ in }\, \mathbb{T},
\begin{cases}
\displaystyle  f_{\overline{z}}=\varphi &\mbox{ on }\, \mathbb{T},\\
\displaystyle f=f^{\ast}&\mbox{ on }\, \mathbb{T},
\end{cases}
\ee  where $$\Delta f=f_{xx}+f_{yy}=4f_{z \overline{z}}$$ is the
{\it Laplacian} of $f$.

In particular, if $g\equiv0$, then any
solution to (\ref{eq-ch-1.0}) is {\it biharmonic}. For the
properties of biharmonic mappings, see
\cite{CWY99,Str03}.
Chen et. al. in \cite{CLW} have discussed the Schwarz-type lemma, Landau-type theorems and bilipschitz properties for the solutions of
non-homogeneous biharmonic equations \eqref{eq-ch-1.0} satisfying \eqref{eq-ch-1}.

Suppose that
\be\label{G-1}G(z,w)=|z-w|^{2}\log\left|\frac{1-z\overline{w}}{z-w}\right|^{2}-(1-|z|^{2})(1-|w|^{2})\ee
and
$$P(z,e^{i\theta})=\frac{1-|z|^{2}}{|1-ze^{-i\theta}|^{2}} \quad (\theta\in[0,2\pi])
$$
denote the {\it biharmonic Green function} and {\it (harmonic)
Poisson kernel} in $\mathbb{D}$, respectively. It follows from  \cite[Theorem 2]{Beg05} that all the
solutions to the equation (\ref{eq-ch-1.0}) satisfying the boundary conditions (\ref{eq-ch-1}) are given by
\beq\label{eq-ch-3}
f(z)&=&\mathcal{P}_{f^{\ast}}(z)+\frac{1}{2\pi}(1-|z|^{2})\int_{0}^{2\pi}f^{\ast}(e^{it})\frac{\overline{z}e^{it}}{(1-\overline{z}e^{it})^{2}}dt\\
\nonumber
&&-(1-|z|^{2})\mathcal{P}_{\varphi_{1}}(z)-\frac{1}{8}G[g](z),
\eeq
where
\be\label{mon-001}\mathcal{P}_{f^{\ast}}(z)=\frac{1}{2\pi}\int_{0}^{2\pi}P(z,e^{it})f(e^{it})dt,
\quad
\mathcal{P}_{\varphi_{1}}(z)=\frac{1}{2\pi}\int_{0}^{2\pi}P(z,e^{it})\varphi_{1}(e^{it})dt,
\ee
\be\label{mon-002} \varphi_{1}(e^{it})=\varphi(e^{it})e^{-it}\quad \mbox{and}\quad   G[g](z)=\frac{1}{2\pi} \int_{\mathbb{D}} g(w)G(z,w)dA(w).
\ee
Here $dA(w)$ denotes the Lebesgue area measure in $\mathbb{D}$.

The solvability of the non-homogeneous biharmonic equations has also been studied in \cite{MM09}.

Let us recall the following version of the boundary Schwarz lemma of analytic functions, which was proved in \cite{LT15}.

\begin{Thm}{\rm (\cite[Theorem~$1.1'$]{LT15})}\label{MWZ} Suppose that $f$ is an analytic function from $\mathbb{D}$ into itself. If $f(0)=0$ and $f$ is analytic at $z=\alpha\in \mathbb{T}$ with $f(\alpha)=\beta\in \mathbb{T}$, then
\begin{enumerate}
\item $\overline{\beta}f'(\alpha)\alpha \geq 1$.
\item $\overline{\beta}f'(\alpha)\alpha = 1$ if and only if $f(z)\equiv e^{i\theta} z$, where $e^{i\theta}=\beta \alpha^{-1}$ and $\theta\in \R$.
\end{enumerate}
\end{Thm}

This useful result has attracted much attention and has been
generalized in various forms (see, e.g., \cite{CK,Zhu18-fil}). Recently, Wang et. al. obtained a boundary Schwarz lemma for
the solutions to Poisson's equation (\cite{WZ18}).
By analogy with the studies in \cite{WZ18}, we discuss the boundary Schwarz lemma for the functions with the form \eqref{eq-ch-3}. Our result is as follows. Note that a different form of the boundary Schwarz lemma
for functions with the form \eqref{eq-ch-3} was proved in \cite{CLW}.

\begin{theorem}\label{bsl-2}
Suppose $f\in \mathcal{C}^4(\mathbb{D})$ and $g\in \mathcal{C}(\overline{\mathbb{D}})$ satisfy the following equations:
$$\left\{\begin{array}{ll}
	\Delta (\Delta f)=g & \mbox{ in } \mathbb{D},\\
	f_{\overline{z}}=\varphi & \mbox{ on } \mathbb{T},\\
	f=f^\ast & \mbox{ on } \mathbb{T},
	\end{array}\right.
$$
where $\varphi\in \mathcal{C}(\mathbb{T})$, $f^\ast \in \mathcal{C}(\overline{\mathbb{D}})$, $f^\ast $ is analytic in $\mathbb{D}$ and $f(\mathbb{D})\subset \mathbb{D}$. If $f$ is differentiable at $z=\alpha\in \mathbb{T}$, $f(\alpha)=\beta\in \mathbb{T}$ and $f(0)=0$, then
\be\label{wen-1} \real [\overline{\beta}(f_z(\alpha)\alpha+f_{\overline{z}}(\alpha)\overline{\alpha})]\ge
\frac{2}{\pi}-3\|\mathcal{P}_{\varphi_1}\|_\infty-\frac{1}{64} \|g\|_{\infty},
\ee
where $\mathcal{P}_{\varphi_1}$  and $\varphi_1$ are defined in \eqref{mon-001} and  \eqref{mon-002}, respectively.

In particular, when $\|\mathcal{P}_{\varphi_1}\|_\infty=\|g\|_{\infty}=0$, the following inequality is sharp:
\be\label{mana-1}\real [\overline{\beta}(f_z(\alpha)\alpha+f_{\overline{z}}(\alpha)\overline{\alpha})]\ge
\frac{2}{\pi}.\ee
\end{theorem}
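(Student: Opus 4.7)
The plan is to rewrite the representation~\eqref{eq-ch-3} using the analyticity of $f^*$. Since $f^*$ is analytic (hence harmonic) in $\mathbb{D}$ and continuous on $\overline{\mathbb{D}}$, one has $\mathcal{P}_{f^*}=f^*$; moreover, expanding the kernel $\frac{\overline z e^{it}}{(1-\overline z e^{it})^2}=\sum_{n\ge 1}n\overline z^{\,n}e^{int}$ and the boundary Taylor series $f^*(e^{it})=\sum_{k\ge 0}a_k e^{ikt}$, orthogonality of $\{e^{imt}\}$ makes the second integral term in~\eqref{eq-ch-3} vanish. The representation collapses to
\[
f(z)=f^*(z)-(1-|z|^2)\mathcal{P}_{\varphi_1}(z)-\tfrac{1}{8}G[g](z).
\]
Combining $f=f^*$ on $\mathbb{T}$ with $f(\mathbb{D})\subset\mathbb{D}$ yields $|f^*|\le 1$ on $\mathbb{T}$; the maximum principle then upgrades this to $|f^*|\le 1$ on $\overline{\mathbb{D}}$, so $f^*$ is an analytic self-map of the disk. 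Setting $z=0$ in the reduced formula gives $f^*(0)=\mathcal{P}_{\varphi_1}(0)+\tfrac{1}{8}G[g](0)$.

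I next compute the boundary radial derivative by differentiating this reduced representation. At $|z|=1$ the factor $(1-|z|^2)$ kills the interior derivative of $\mathcal{P}_{\varphi_1}$, so the $\mathcal{P}_{\varphi_1}$ term contributes only through differentiation of the prefactor $(1-|z|^2)$, which produces $2\real[\overline\beta\mathcal{P}_{\varphi_1}(\alpha)]$. Using the identity $|1-z\overline w|^2-|z-w|^2=(1-|z|^2)(1-|w|^2)$, a short computation shows that both $G(\alpha,w)=0$ and the radial derivative $\partial_r G(\alpha,w)=0$ for every $w\in\mathbb{D}$, so the biharmonic Green term contributes nothing at the boundary. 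Assembling,
\[
\real\bigl[\overline\beta\bigl(f_z(\alpha)\alpha+f_{\overline z}(\alpha)\overline\alpha\bigr)\bigr]=\real\bigl[\overline\beta f^{*\prime}(\alpha)\alpha\bigr]+2\real\bigl[\overline\beta\mathcal{P}_{\varphi_1}(\alpha)\bigr].
\]

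Finally, I lower bound $\real[\overline\beta f^{*\prime}(\alpha)\alpha]$ by applying a boundary Schwarz lemma to the analytic self-map $f^*$. Since $f^*(0)$ is generally nonzero, a naive use of Theorem~\Ref{MWZ} (after Möbius normalization) would give leading constant $1$, whereas the sharpness~\eqref{mana-1} requires a leading constant $2/\pi$; I therefore plan to use instead the harmonic variant, which for a harmonic self-map with $h(0)=0$ yields the Heinz-type lower bound $\real[\overline\beta\partial_r h(\alpha)]\ge 2/\pi$ (the constant comes from $\lim_{r\to 1^-}(1-\tfrac{4}{\pi}\arctan r)/(1-r)=2/\pi$), extended to accommodate $f^*(0)\neq 0$ in the form $\real[\overline\beta f^{*\prime}(\alpha)\alpha]\ge \tfrac{2}{\pi}-|f^*(0)|$. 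The quantity $|f^*(0)|$ is controlled by the polar-coordinate calculation
\[
\int_{\mathbb{D}}|G(0,w)|\,dA(w)=2\pi\int_0^1\bigl(\rho-\rho^3+2\rho^3\log\rho\bigr)\,d\rho=\tfrac{\pi}{4},
\]
which gives $|G[g](0)|\le\|g\|_\infty/8$ and hence $|f^*(0)|\le\|\mathcal{P}_{\varphi_1}\|_\infty+\tfrac{1}{64}\|g\|_\infty$; together with $|\mathcal{P}_{\varphi_1}(\alpha)|\le\|\mathcal{P}_{\varphi_1}\|_\infty$, these inputs combine to produce~\eqref{wen-1}. The principal obstacle is calibrating the harmonic boundary Schwarz lemma for $f^*$ so that it yields both the constant $2/\pi$ (recovering the sharpness~\eqref{mana-1} when the corrections vanish) and coefficient exactly $1$ in front of $|f^*(0)|$; this is what delivers the precise coefficients $3$ and $1/64$ in~\eqref{wen-1} after the $2\real[\overline\beta\mathcal{P}_{\varphi_1}(\alpha)]$ contribution is added.
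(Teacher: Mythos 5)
The step that carries all of the analytic content of \eqref{wen-1} is your inequality $\real[\overline\beta f^{*\prime}(\alpha)\alpha]\ge \tfrac{2}{\pi}-|f^*(0)|$, and you do not prove it: you explicitly defer it as ``the principal obstacle'' whose constants still need ``calibrating''. That is a genuine gap, not a technicality, and it is not fixable by citing a known ``harmonic variant'': $f^*$ is \emph{analytic}, and the relevant classical boundary result for analytic self-maps (Julia--Wolff--Carath\'eodory/Osserman) gives the lower bound $\tfrac{1-|f^*(0)|}{1+|f^*(0)|}$, whose leading constant is $1$, not $2/\pi$. While that bound does imply the inequality you want, completing your argument with it would yield a final constant $1$ in the homogeneous case, which collides with the claimed sharpness of \eqref{mana-1} and with the paper's extremal function (which is harmonic, not of the form ``analytic $f^*$ plus vanishing corrections'' in any classical sense). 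So your reduction plus a correct boundary Schwarz lemma for $f^*$ cannot simply be tuned to output $2/\pi$; you must either prove the exact lemma you need or explain why the constant degrades to $2/\pi$. There are also secondary gaps: the hypothesis is only that $f$ is differentiable at $\alpha$, so the term-by-term boundary differentiation of the reduced representation needs justification (the radial difference quotients of $(1-|z|^2)\mathcal{P}_{\varphi_1}$ and of $G[g]$ do converge, to $2\varphi_1(\alpha)$ and $0$, but your assertion that $(1-|z|^2)$ ``kills the interior derivative of $\mathcal{P}_{\varphi_1}$'' requires the estimate $(1-|z|)\,|\nabla\mathcal{P}_{\varphi_1}(z)|\to 0$ along the radius, which must be argued); and you never address the sharpness half of the statement at all.

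For contrast, the paper's proof avoids every one of these issues: after normalizing to $\alpha=\beta=1$, it never differentiates the representation termwise and never invokes a boundary Schwarz lemma for $h^*$. Instead it bounds $|h(z)|\le M(|z|)$ by an explicit radial majorant assembled from the Heinz-type estimate $\bigl|\mathcal{P}_{h^{\ast}}(z) -\tfrac{1-|z|^{2}}{1+|z|^{2}}\mathcal{P}_{h^{\ast}}(0)\bigr|\le \tfrac{4}{\pi}\|\mathcal{P}_{h^{\ast}}\|_{\infty}\arctan|z|$ (treating $\mathcal{P}_{h^*}$ merely as a bounded harmonic function --- this is exactly where the constant $2/\pi$ enters), the analogous bound for $\mathcal{P}_{\psi_1}$, the Green-potential bound $|G[\mathfrak{g}](z)|\le\tfrac18\|\mathfrak{g}\|_\infty(1-|z|^2)^2$, and the normalization $h(0)=0$; it then uses the first-order expansion of $h$ at $1$ to get $2\real[h_z(1)(1-z)+h_{\overline z}(1)(1-\overline z)]\ge 1-M^2(|z|)-o(|z-1|)$ and lets $z=r\to1^-$, so the right-hand side of \eqref{wen-1} is simply $\lim_{r\to1^-}M'(r)$. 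If you want to salvage your route, either adopt this majorant-plus-expansion mechanism, or state and prove the precise boundary lemma for $f^*$ with the constants you need, and then reconcile the resulting leading constant with the sharpness claim.
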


We have the following two remarks.
\begin{enumerate}
\item
For analytic functions, the value of $\overline{\beta}f'(\alpha)\alpha$ in Theorem A is
a real number. However, this is not true for the case of the solutions to the equation
\eqref{eq-ch-1.0} (see Example~\ref{existence} below). Hence, in Theorem \ref{bsl-2}, we consider the real part of the quantity $\overline{\beta}(f_z(\alpha)\alpha+f_{\overline{z}}(\alpha)\overline{\alpha})$.
\item The obtained lower bound for the quantity $\real [\overline{\beta}(f_z(\alpha)\alpha+f_{\overline{z}}(\alpha)\overline{\alpha})]$ in \eqref{wen-1} is always positive for all $\varphi_1$ and $g$ with $(\|\mathcal{P}_{\varphi_1}\|_{\infty}, \|g\|_{\infty})\in \{(x,y):\; x\geq 0,\; y\geq 0,\; 3x+\frac{1}{64}y<\frac{2}{\pi}\}$.
\end{enumerate}

\section{Proof of Theorem \ref{bsl-2}}
We start with the following lemma.

\begin{lemma}\label{bsl-1}
Suppose $\mathfrak{g}\in \mathcal{C}(\overline{\mathbb{D}})$ and $h \in \mathcal{C}^4(\mathbb{D})$ satisfy the following equations:
$$\left\{\begin{array}{ll}
\Delta (\Delta h)=\mathfrak{g} & \mbox{ in } \mathbb{D},\\
h_{\overline{z}}=\psi & \mbox{ on } \mathbb{T},\\
h=h^\ast & \mbox{ on } \mathbb{T},
\end{array}\right.
$$
where $\psi \in \mathcal{C}(\mathbb{T})$, $h^\ast \in \mathcal{C}(\overline{\mathbb{D}})$, $h^\ast $ is analytic in $\mathbb{D}$ and $h(\mathbb{D})\subset \mathbb{D}$. If $h$ is differentiable at $z=1$, $h(1)=1$ and $h(0)=0$, then
\[ \real [h_z(1)+h_{\overline{z}}(1)]\ge
\frac{2}{\pi}-3\|\mathcal{P}_{\psi_1}\|_\infty-\frac{1}{64} \|\mathfrak{g}\|_{\infty},
\]
where $\psi_1(e^{it})=\psi(e^{it}) e^{-it}$.

In particular, when $\|\mathcal{P}_{\psi_1}\|_\infty=\|\mathfrak{g}\|_{\infty}=0$, the following inequality is sharp:
\be\label{mana-2}\real \left[h_z(1)+h_{\overline{z}}(1)\right]\ge
\frac{2}{\pi}.\ee
\end{lemma}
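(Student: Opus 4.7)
The plan is to reduce the inequality to the classical analytic boundary Schwarz lemma (Theorem~A) applied to the analytic principal part $h^*$ of $h$, then to control the auxiliary Poisson and Green's function terms.

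First I would simplify the representation~\eqref{eq-ch-3}: since $h^*$ is analytic in $\mathbb{D}$ and continuous on $\overline{\mathbb{D}}$, its Poisson extension $\mathcal{P}_{h^*}$ agrees with $h^*$, and Cauchy's theorem (after an $r\uparrow 1$ approximation) makes the middle integral $\int_{0}^{2\pi}h^*(e^{it})\overline{z}e^{it}(1-\overline{z}e^{it})^{-2}\,dt$ vanish for every $z\in\mathbb{D}$, since the integrand in $w=e^{it}$ has no singularities in $\overline{\mathbb{D}}$ when $|z|<1$. Hence
\begin{equation*}
h(z) = h^*(z) - (1-|z|^{2})\mathcal{P}_{\psi_{1}}(z) - \tfrac{1}{8}G[\mathfrak{g}](z), \qquad z\in\mathbb{D}.
\end{equation*}
Using $G(1,w)\equiv 0$, the limit $z\to 1$ yields $h^*(1)=1$, while $z=0$ combined with $h(0)=0$ gives $c:=h^*(0)=\mathcal{P}_{\psi_{1}}(0)+\tfrac{1}{8}G[\mathfrak{g}](0)$. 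An explicit computation shows $\int_{\mathbb{D}}|G(0,w)|\,dA(w)=\pi/4$, so $|c|\le \|\mathcal{P}_{\psi_{1}}\|_{\infty}+\tfrac{1}{64}\|\mathfrak{g}\|_{\infty}$.

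Next I would apply Theorem~A via a Möbius shift. By the maximum principle $h^*(\mathbb{D})\subset\mathbb{D}$, and the disk automorphism $\phi_{c}(w)=(w-c)/(1-\overline{c}w)$ produces $F=\phi_{c}\circ h^*$ with $F(0)=0$, $F(1)\in\mathbb{T}$, and $F$ differentiable at $z=1$. Theorem~A applied to $F$ gives $\real[\overline{F(1)}F'(1)]\ge 1$, and after unwinding the chain rule $F'(1)=\phi_{c}'(1)(h^*)'(1)$ with $\phi_{c}'(1)=(1-|c|^{2})/(1-\overline{c})^{2}$ and $\overline{F(1)}=(1-\overline{c})/(1-c)$, this becomes $\real(h^*)'(1)\ge |1-c|^{2}/(1-|c|^{2})$.

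I would then differentiate the simplified representation and evaluate at $z=1$. Since $\partial_z|z|^2=\overline{z}$ and $\partial_{\overline{z}}|z|^2=z$, together with $(1-|z|^2)|_{z=1}=0$, the gradient of $\mathcal{P}_{\psi_1}$ drops out and the derivatives of the $1-|z|^2$ factor contribute $2\mathcal{P}_{\psi_1}(1)$ to $h_z(1)+h_{\overline{z}}(1)$. A direct calculation yields $\partial_{z}G(1,w)=\partial_{\overline{z}}G(1,w)=0$ for every $w\in\mathbb{D}$; uniform boundedness of these partials near $z=1$ lets dominated convergence eliminate the Green's function contribution. The outcome is
\begin{equation*}
h_z(1)+h_{\overline{z}}(1) = (h^*)'(1) + 2\mathcal{P}_{\psi_{1}}(1),
\end{equation*}
and taking real parts, together with the Schwarz estimate and the bound $2\real\mathcal{P}_{\psi_{1}}(1)\ge -2\|\mathcal{P}_{\psi_{1}}\|_\infty$, assembles the desired inequality.

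The main obstacle is pinning down the specific constant $\tfrac{2}{\pi}$ and the precise coefficients $3$ and $\tfrac{1}{64}$ in the lemma: the purely analytic Möbius--Schwarz estimate above delivers a constant $\ge 1$ in the unperturbed regime $\|\mathcal{P}_{\psi_{1}}\|_{\infty}=\|\mathfrak{g}\|_{\infty}=0$, which is stronger than $\tfrac{2}{\pi}$ but produces error coefficients $4$ and $\tfrac{1}{32}$ rather than $3$ and $\tfrac{1}{64}$. The cleanest route to the exact numbers, and to the sharpness of~\eqref{mana-2}, is to substitute Theorem~A with a harmonic boundary Schwarz lemma applied to the harmonic function $h^*$, whose sharp constant at the boundary is known to be $\tfrac{2}{\pi}$; this leaves the structural steps unchanged while yielding the quoted form.
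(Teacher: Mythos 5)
Your proposal takes a genuinely different route from the paper's, and the differences are not merely cosmetic: as written it does not prove the stated inequality. The paper never differentiates the representation formula term by term and never invokes the analytic boundary Schwarz lemma for $h^*$. Instead it builds a radial majorant $|h(z)|\le M(|z|)$ with $M(1)=1$, using the Schwarz--Heinz-type estimate $\bigl|\mathcal{P}_{h^*}(z)-\frac{1-|z|^2}{1+|z|^2}\mathcal{P}_{h^*}(0)\bigr|\le\frac{4}{\pi}\|\mathcal{P}_{h^*}\|_\infty\arctan|z|$ from \cite{CLW} (which exploits only harmonicity and $\|\mathcal{P}_{h^\ast}\|_\infty\le 1$, not analyticity), the analogous estimate for $\mathcal{P}_{\psi_1}$, and $|G[\mathfrak{g}](z)|\le\frac18\|\mathfrak{g}\|_\infty(1-|z|^2)^2$; it then combines $1-|h(z)|^2\ge 1-M^2(|z|)$ with the first-order expansion of $h$ at $z=1$ and lets $z=r\to 1^-$, so that the lower bound appears as $\lim_{r\to1^-}M'(r)$. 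That radial-limit computation is precisely where the constants $\frac{2}{\pi}$, $3$ and $\frac{1}{64}$ come from, and working only with modulus bounds lets the paper bypass all boundary-regularity questions for $\nabla\mathcal{P}_{\psi_1}$ and $\nabla G[\mathfrak{g}]$.

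The concrete gaps in your route are these. First, Theorem A requires $F=\phi_c\circ h^*$ to be analytic at $z=1$, whereas the hypotheses give only real-differentiability of $h$ at $1$; nothing guarantees analyticity, or even complex differentiability, of $h^*$ at $1$, so the inequality $\real\,(h^*)'(1)\ge |1-c|^2/(1-|c|^2)$ is unjustified (one would need a Julia--Wolff--Carath\'eodory-type substitute). Second, the M\"obius normalization needs $|c|=|h^*(0)|<1$, and this can fail: the paper's own extremal function has $h^\ast\equiv 1$, so $c=1$ and $\phi_c$ degenerates --- a warning that the conclusion $\real[h_z(1)+h_{\overline{z}}(1)]\ge 1$ your method would yield in the unperturbed case sits in tension with the claimed sharpness of $\frac{2}{\pi}$ in \eqref{mana-2}. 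Third, the term-by-term differentiation at $z=1$ (that $(1-|z|^2)\nabla\mathcal{P}_{\psi_1}\to 0$ and that the Green-function gradient can be passed to the limit) is asserted rather than proved. Fourth, and decisively, you concede the constants do not come out right, and the proposed repair --- a harmonic boundary Schwarz lemma applied to $h^*$ --- does not work: such lemmas require the function to vanish at the origin, while $h^*(0)=c\ne 0$ in general and the M\"obius renormalization is unavailable for harmonic maps; besides, $h^*$ is analytic, so nothing forces the constant $\frac{2}{\pi}$ to emerge from it. In the paper the $\frac{2}{\pi}$ comes from the majorant $M$, not from any Schwarz lemma applied to $h^*$, and your argument as it stands establishes neither \eqref{mana-2} nor the full inequality of the lemma.
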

\begin{proof}
The assumptions of the lemma ensure that $h$ has the form \eqref{eq-ch-3}, i.e.,
$$
h(z)=\mathcal{P}_{h^{\ast}}(z)+\frac{1}{2\pi}(1-|z|^{2})\int_{0}^{2\pi}h^{\ast}(e^{it})\frac{\overline{z}e^{it}}{(1-\overline{z}e^{it})^{2}}dt-(1-|z|^{2})
\mathcal{P}_{\psi_{1}}(z)-\frac{1}{8}G[\mathfrak{g}](z).
$$

Since the analyticity of $h^\ast$ in $\mathbb{D}$ gives
\be\label{eq-anal}
\frac{1}{2\pi}\int_{0}^{2\pi}\overline{z}e^{it}h^{\ast}(e^{it})\frac{1-|z|^{2}}{(1-\overline{z}e^{it})^{2}}dt=0,
\ee
we obtain that
\begin{eqnarray}\label{wed-001}
|h(z)| &=& \left| \mathcal{P}_{h^\ast}(z)-(1-|z|^2) \mathcal{P}_{\psi_1}(z)-\frac{1}{8} G[\mathfrak{g}](z)\right|\\ \nonumber
&\le & \left| \mathcal{P}_{h^\ast}(z) -\frac{1-|z|^{2}}{1+|z|^{2}}\mathcal{P}_{h^{\ast}}(0)\right|+
(1-|z|^2)\left|\mathcal{P}_{\psi_1}(z)- \frac{1-|z|^{2}}{1+|z|^{2}}\mathcal{P}_{\psi_{1}}(0)\right|\\ \nonumber
&& +\frac{1-|z|^2}{1+|z|^2} \left(\left|\mathcal{P}_{h^{\ast}}(0)-\mathcal{P}_{\psi_1}(0)\right|+|z|^2\left|\mathcal{P}_{\psi_1}(0)\right|\right)+\left|\frac{1}{8} G[\mathfrak{g}](z)\right|.
\end{eqnarray}

By the proof of Theorem $1.1$ in \cite{CLW}, we have the following estimates:
$$ \left| \mathcal{P}_{h^\ast}(z) -\frac{1-|z|^{2}}{1+|z|^{2}}\mathcal{P}_{h^{\ast}}(0)\right|\le
\frac{4}{\pi} \|\mathcal{P}_{h^{\ast}}\|_{\infty}\arctan|z|,
$$
$$\left|\mathcal{P}_{\psi_1}(z)- \frac{1-|z|^{2}}{1+|z|^{2}}\mathcal{P}_{\psi_{1}}(0)\right| \le
\frac{4}{\pi}\|\mathcal{P}_{\psi_{1}}\|_{\infty}\arctan|z|
$$
and
$$
\left|G[\mathfrak{g}](z)\right|\le \frac{1}{8}\|\mathfrak{g}\|_\infty (1-|z|^2)^2.
$$

Moreover, it follows from the assumption $h(0)=0$ that
$$ \mathcal{P}_{h^{\ast}}(0)-\mathcal{P}_{\psi_1}(0)=\frac{1}{8} G[\mathfrak{g}](0),
$$ and so, we get
$$
|\mathcal{P}_{h^{\ast}}(0)-\mathcal{P}_{\psi_1}(0)|\le \frac{1}{64} \|\mathfrak{g}\|_{\infty}.
$$

Based on the above estimates, together with the fact $\|\mathcal{P}_{h^\ast}\|_{\infty}\le 1$, the inequality \eqref{wed-001} is changed into the following form:
\begin{eqnarray}
\label{eqn-f-bound} |h(z)| &\le&  \frac{4}{\pi}
\arctan|z|+ \frac{1-|z|^2}{1+|z|^2}
\left(\frac{1}{64} \|\mathfrak{g}\|_{\infty}+|z|^2 \|\mathcal{P}_{\psi_1}\|_\infty \right)
\\
&&\nonumber +\frac{4}{\pi}\|\mathcal{P}_{\psi_{1}}\|_{\infty}(1-|z|^{2})\arctan|z| +\frac{1}{64}\|\mathfrak{g}\|_\infty (1-|z|^2)^2\\
&=:&\nonumber M(|z|).
\end{eqnarray}

Since $h$ is differentiable at $z=1$, we have
\begin{equation}\nonumber
h(z)=1+h_z(1)(z-1)+h_{\overline{z}}(1)(\overline{z}-1)+o(|z-1|),
\end{equation}
where $o(x)$ means a function with $\lim_{x\to 0} o(x)/x=0$. Then we deduce from \eqref{eqn-f-bound} that
$$ 2 \real [h_z(1)(1-z)+h_{\overline{z}}(1)(1-\overline{z})]\ge 1-M^2(|z|)-o(|z-1|).
$$

By letting $z=r\in (0,1)$ and $r\to 1^-$, we get
$$\real [h_z(1)+h_{\overline{z}}(1)]\ge  \lim_{r\to 1^-} M'(r)=
\frac{2}{\pi}-3\|\mathcal{P}_{\psi_1}\|_\infty-\frac{1}{64} \|\mathfrak{g}\|_{\infty}.
$$

To finish the proof of the lemma, it remains to check the sharpness of the inequality
\eqref{mana-2}. For this, we borrow the following function from \cite[Page 127]{ABR}:
\be\label{mana-3}
\mathfrak{h}(z)=\left\{\begin{array}{ll}
\frac{2}{\pi} \arctan \frac{z+\overline{z}}{1-|z|^2}\; \mbox{ if }\; z\in \mathbb{D},\\
\;\;\;\;\;\;\;\;\;\;\;1\; \;\;\;\;\;\;\;\,\ \mbox{ if }\; z\in\mathbb{T}.
\end{array}\right.
\ee
It can be seen that $\mathfrak{h}$ is harmonic in $\mathbb{D}$ with $\mathfrak{h}(0)=0$ and $\mathfrak{h}(1)=1$.

Since
\be\label{lem-sh} \mathfrak{h}_z(z)=\frac{2}{\pi} \frac{1+\overline{z}^2}{(1-|z|^2)^2+(z+\overline{z})^2}\;\; \mbox { and }\;\;
\mathfrak{h}_{\overline{z}}(z)= \frac{2}{\pi} \frac{1+{z}^2}{(1-|z|^2)^2+(z+\overline{z})^2},
\ee we know that both $\mathfrak{h}_z$ and $\mathfrak{h}_{\overline{z}}$ are continuous at $z=1$. This
guarantees the differentiability of $\mathfrak{h}$ at this point.

Let
$$\mathfrak{h}^\ast(z)=1
$$ in $\mathbb{\overline{D}}.$
It is clear that $\mathfrak{h}^\ast$ is analytic in $\mathbb{D}$ and $\mathfrak{h}=\mathfrak{h}^\ast$ on $\mathbb{T}$. Further, the harmonicity of $\mathfrak{h}$ in $\mathbb{D}$, together with \cite[(1.5)]{CLW} and \eqref{eq-anal}, ensures that $$\mathcal{P}_{\psi_1}=0.$$  Since \eqref{lem-sh} leads to
$$ \real \left[\mathfrak{h}_z(1)+\mathfrak{h}_{\overline{z}}(1)\right]=\frac{2}{\pi},
$$ we see that $\mathfrak{h}$ is our needed extremal function for the sharpness of \eqref{mana-2}.
The proof of the lemma is complete.
\end{proof}

\begin{proof}[\bf Proof of Theorem \ref{bsl-2}]
Let $$h(z)=\overline{\beta} f(\alpha z)$$ in $\mathbb{D}$, and let
$$\mathfrak{g}(z)=\overline{\beta}g(\alpha z)$$ in $\mathbb{D}$,
$$\psi(\xi)=\overline{\beta}\overline{\alpha}\varphi(\alpha \xi)$$ on $\mathbb{T}$, and
$$h^*(z)=\overline{\beta}f^*(\alpha z)$$ in $\overline{\mathbb{D}}$.
Then we know from Lemma \ref{bsl-1} that
$$ \real [h_z(1)+h_{\overline{z}}(1)]\ge
\frac{2}{\pi}-3\|\mathcal{P}_{\varphi_1}\|_\infty-\frac{1}{64} \|g\|_{\infty},
$$
from which the inequality \eqref{wen-1} in Theorem \ref{bsl-2} follows since
$$\real [\overline{\beta}(f_z(\alpha)\alpha+f_{\overline{z}}(\alpha)\overline{\alpha})]=\real [h_z(1)+h_{\overline{z}}(1)].
$$

 The inequality \eqref{mana-1} is obvious. For its sharpness, let
$$\mathfrak{f}(z)=\frac{2 \beta}{\pi} \arctan \frac{\overline{\alpha}z+\alpha \overline{z}}{1-|z|^2}
$$
in $\mathbb{D}$. Then $$\mathfrak{f}(z) =\beta \mathfrak{h}(\overline{\alpha} z),$$
where the function $\mathfrak{h}$ is defined in \eqref{mana-3}.  By the discussions on the sharpness of the inequality \eqref{mana-2} in the proof of Lemma \ref{bsl-1},
we see that $\mathfrak{f}$ is the needed function for the sharpness of the inequality \eqref{mana-1}.
Now, the theorem is proved.
\end{proof}

\section{An example}
In this section, we construct an example to show that, in Theorem \ref{bsl-2}, it is reasonable for us to consider the real part of the quantity $\overline{\beta}(f_z(\alpha)\alpha+f_{\overline{z}}(\alpha)\overline{\alpha})$.

\begin{example}\label{existence}
	Assume that
$$g(z)=32 M\left[2-3i(z^2+\overline{z}^2)\right]$$ and
	$$f(z)=	(1-M)z^2+\frac{Mi}{4} (1-|z|^4)(z^2+\overline{z}^2)+M|z|^4$$ in $\overline{\mathbb{D}}$,
	where $0<M<\frac{2}{35\pi}\sqrt{5}(3-\sqrt{2})$.
		Then
\begin{enumerate}
\item
$f$ and $g$ satisfy the following non-homogeneous biharmonic equation $$\Delta^2 f=g,$$  and all other assumptions in Theorem \ref{bsl-2} with $\alpha=\beta=1$;
\item
${\rm Re} \big(f_z(1)+f_{\overline{z}}(1)\big)=2(1+M)$, $\|\mathcal{P}_{\varphi_1}\|_\infty=\sqrt{5}M$, $\|g\|_{\infty}=64\sqrt{10}M$ and
$${\rm Im} \big(f_z(1)+f_{\overline{z}}(1)\big)=-2M\not=0,$$
where $\varphi_{1}(\zeta)=\frac{M}{2} \left(4-i (\zeta^2+\overline{\zeta}^2)\right )$ on $\mathbb{T}$.
\end{enumerate}	
\end{example}

\begin{proof}  Elementary computations yield
	\be\label{mw-1} f_z(z)=2(1-M)z+\frac{Mi}{2} \left[z(1-|z|^4)-z\overline{z}^2(z^2+\overline{z}^2)\right]+2Mz\overline{z}^2,
	\ee
		\be\label{mw-2}f_{\overline{z}}(z)=\frac{Mi}{2} \left[\overline{z}(1-|z|^4)-z^2\overline{z}(z^2+\overline{z}^2)\right]+
	2Mz^2\overline{z}
	\ee
	and $$\Delta^2 f=g.$$

Obviously, $f(0)=0$ and $f(1)=1$. Let
$$\varphi(\zeta)=\frac{M}{2}\zeta \left(4-i (\zeta^2+\overline{\zeta}^2)\right )$$ on $\mathbb{T}$,
and
$$f^\ast(z)=(1-M)z^2+M$$ in $\overline{\mathbb{D}}.$ Then $$f_{\overline{z}}=\varphi$$  on $\mathbb{T}$, $f^*$ is analytic in $\mathbb{D}$, and
$$f^*=f$$ on $\mathbb{T}$.

	Since for $z\in \mathbb{D}$,
	$$ |f(z)|\le |z|^2 \left(1-\frac{M}{2} (1-|z|^2)^2 \right)<1,
	$$
	we see that $f(\mathbb{D})\subset \mathbb{D}$.
	
Moreover, the differentiability of $f$ at $z=1$ can be seen from the continuity of
	its partial derivatives (cf. \eqref{mw-1} and \eqref{mw-2}).
	Now, we have proved that the first conclusion of the example is true.
	
The equalities
$$ {\rm Re} \big(f_z(1)+f_{\overline{z}}(1)\big)=2(1+M)\;\;\mbox{and}\;\; {\rm Im} [f_z(1)+ f_{\overline{z}}(1)]=-2M\not=0
	$$ easily follow from \eqref{mw-1} and \eqref{mw-2} and elementary computations give
$$\|\mathcal{P}_{\varphi_1}\|_\infty=\max_{z\in \overline{\mathbb{D}}}\left\{\frac{M}{2} \left |4-i (z^2+\overline{z}^2)\right |\right\}=\sqrt{5}M$$
and
$$\|g\|_{\infty}=\max_{z\in \overline{\mathbb{D}}}\left\{32 M\left |2-3i(z^2+\overline{z}^2)\right |\right\}=64\sqrt{10}M.$$
Hence the second conclusion of the example is true too, and so, the proof of the example is complete.
\end{proof}

\begin{remark}
The purpose to add the condition $0<M<\frac{2}{35\pi}\sqrt{5}(3-\sqrt{2})$ in Example \ref{existence} is to guarantee that $$3\|\mathcal{P}_{\varphi_1}\|_\infty+\frac{1}{64} \|g\|_{\infty}<\frac{2}{\pi},$$ i.e., the quantity $\frac{2}{\pi}-3\|\mathcal{P}_{\varphi_1}\|_\infty-\frac{1}{64} \|g\|_{\infty}$ is positive.
\end{remark}

\vspace*{5mm}
\noindent {\bf Acknowledgments}.
The research was partly supported by NSFs of China (Nos 11571216, 11671127 and 11720101003) and STU SRFT.
The third author was supported by NSF of Fujian Province (No. 2016J01020)
and the Promotion Program for Young and Middle-aged Teachers in Science and Technology Research of Huaqiao University (ZQN-PY402).


\begin{thebibliography}{99}

\bibitem{ABR} {S. Axler, P. Bourdon and W. Ramey,}
{\em Harmonic function theory},
Springer-Verlag, New York, Berlin Heidelberg, second edition, 2004.

\bibitem{Beg05} {H. Begehr,}
{\em Dirichlet problems for the biharmonic equation},
{Gen. Math.}, {\bf 13} (2005), 65--72.

\bibitem{Bonk90} {M. Bonk},
{\em On Bloch's constant},
Proc. Amer. Math. Soc., {\bf 110} (1990), 889--894.

\bibitem{BK94} {D. M. Burns and S. G. Krantz,}
{\em Rigidity of holomorphic mappings and a new  Schwarz lemma at the
boundary}, {J. Amer. Math. Soc.,} {\bf 7} (1994), 661--676.


\bibitem{CLW} {Sh. Chen, P. Li and X. Wang},
{\em Schwarz-type lemma, Landau-type theorem, and Lipschitz-type space of solutions to
inhomogenous biharmonic equations},
J. Geom. Anal., doi: 10.1007/s12220-018-0083-6.

\bibitem{CK} {Sh. Chen and D. Kalaj,}
{\em The Schwarz type lemmas and the Landau type theorem of mappings satisfying Poisson's equations}, Complex Anal. Oper. Theory, {\bf 13} (2019), 2049--2068.

\bibitem{CWY99} {S.-Y. A. Chang, L. Wang and P. Yang,}
{\em A regularity theory of biharmonic maps},
{ Comm. Pure Appl. Math.,} {\bf 52} (1999), 1113--1137.

\bibitem{Gar-book} {J. Garnett,}
{Bounded analytic functions}, Academic Press, New York, 1981.

\bibitem{LT15} {T. Liu and X. Tang},
{\em A new boundary rigidity theorem for holomorphic self-mappings of the unit ball in $\mathbb{C}^n$}, Pure Appl. Math. Q., {\bf 11} (2015), 115--130.

\bibitem{LT16} {T. Liu and X. Tang,}
{\em Schwarz lemma at the boundary of strongly pseudoconvex domain in
$\mathbb{C}^{n}$}, {Math. Ann.,} {\bf 366} (2016), 655--666.

\bibitem{LWT15} {T. Liu, J. Wang and X. Tang},
{\em Schwarz lemma at the boundary of the unit ball in $\mathbb{C}^n$ and
its applications}, J. Geom. Anal., {\bf 25} (2015), 1890--1914.


\bibitem{Kra11} {S. G. Krantz,}
{\em The Schwarz lemma at the boundary},
{Complex Var. Elliptic Equa.,} {\bf 56} (2011), 455--468.

\bibitem{MM09} {S. Mayboroda and V. Maz'ya,}
{\em Boundedness of gradient of a solution and Wiener test of order one
for biharmonic equation}, {Invent. Math.,} {\bf 175} (2009), 287--334.

\bibitem{Oss00} {R. Osserman,}
{\em A sharp Schwarz inequality on the boundary},
Proc. Amer. Math. Soc., {\bf 128} (2000), 3513--3517.

\bibitem{Str03} {P. Strzelechi},
{\em On biharmonic maps and their generalizations},
{Calc. Var. Partial Differential Equations}, {\bf 18} (2003), 401--432.

\bibitem{WZ18} {X. Wang and J.-F. Zhu,}
{\em Boundary Schwarz lemma for solutions to Poisson's equation},
{J. Math. Anal. Appl.}, {\bf 463} (2018), 623--633.

\bibitem{Zhu18-fil} {J.-F. Zhu},
{\em Schwarz lemma and boundary Schwarz lemma for pluriharmonic mappings},
Filomat, {\bf 32} (2018), 5385--5402.
\end{thebibliography}
\end{document}